\newcommand{\xyR}[1]{
	\xydef@\xymatrixrowsep@{#1}}
\newcommand{\xyC}[1]{
	\xydef@\xymatrixcolsep@{#1}}
\newcommand{\bK}{{\mathbb K}}
\newcommand{\bP}{{\mathbb P}}
\newcommand{\bZ}{{\mathbb Z}}
\newcommand{\bG}{{\mathbb G}}
\newcommand{\cE}{{\mathcal E}}
\newcommand{\cF}{{\mathcal F}}
\newcommand{\cO}{{\mathcal O}}
\newcommand{\cT}{{\mathcal T}}
\newcommand{\cM}{{\mathcal M}}
\newcommand{\cN}{{\mathcal N}}
\newcommand{\tM}{{\widetilde{M}}}
\newcommand{\cExt}{{\underline{\mathcal{E}xt}}}
\newcommand{\Gr}{{{\mbox{\rm --\,GrMod}}}}
\newcommand{\Hom}{{{\mbox{\rm Hom}}}}
\newcommand{\h}{{{\mbox{\rm h}}}}
\newcommand{\Qc}{{{\mbox{\rm --\,Qcoh}}}} 
\newcommand{\Qcoh}{{{\mbox{\rm Qcoh}}}}
\newcommand{\Tor}{{{\mbox{\rm --\,Tors}}}}
\newcommand{\Sat}{{{\mbox{\rm Sat}}}}
\newcommand{\Sa}{{{\mbox{\rm --\,Sat}}}} 
\newcommand{\Proj}{ {{\mbox{\rm Proj}}}}
\newcommand{\Ext}{{{\mbox{\rm Ext}}}}
\newcommand{\Sym}{{{\mbox{\rm Sym}}}}
\newcommand{\lcm}{{{\mbox{\rm lcm}}}}
\newcommand{\Sh}{{{\mbox{\rm Sh}}}}
\newcommand{\ve}{\ensuremath{\mathbf{e}}\xspace}
\newcommand{\vm}{\ensuremath{\mathbf{m}}\xspace}
\newcommand{\vv}{\ensuremath{\mathbf{v}}\xspace}
\newcommand{\vx}{\ensuremath{\mathbf{x}}\xspace}
	\newtheorem{theorem}{Theorem}
	\newtheorem{prop}[theorem]{Proposition}
	\newtheorem{lemma}[theorem]{Lemma}
	\newtheorem{cor}[theorem]{Corollary}
	\newtheorem{defn}[theorem]{Definition}
	\newtheorem{conj}[theorem]{Conjecture}
	\theoremstyle{plain}
	\theoremstyle{plain}
\begin{document}
		\title{Ample tangent bundle on smooth projective stacks}
		\author{Karim El Haloui}
		\email{K.El-Haloui@warwick.ac.uk}
		\address{Dept. of Maths, University of Warwick, Coventry,
			CV4 7AL, UK}
		\date{Novembre 06, 2016}
		\subjclass[2010]{Primary 14A20; Secondary 18E40}
		
		\begin{abstract}
			We study ample vector bundles on smooth projective stacks. In particular, we prove that the tangent bundle for the weighted projective stack $\bP(a_0,...,a_n)$ is ample. A result of Mori shows that the only smooth projective varieties with ample vector bundle are isomorphic to $\bP^N$ for some $N$. Extending our geometric spaces from varieties to projective stacks, we are able to provide a new example. This leaves the open question of knowing if the only smooth projective stacks are the weighted projective stacks. 
		\end{abstract}
		
		\maketitle
		
		A conjecture of Hartshorne says that the only $n$-dimensional irreducible smooth projective space whose tangent bundle is ample is isomorphic to $\bP^N$ for some $N$. This was proved for all dimensions and for any characteristic of the base field by Mori \cite{Mor}.
		
		The goal of this paper is to extend the definition of ample vector bundles to smooth projective stacks and provide a new example in this context. We prove that for any positive weights $a_0,...,a_n$, the tangent bundle of the weighted projective stack $\bP(a_0,...,a_n)$ is ample. 
		The next step would be to show whether the only spaces with such characterisation are the weighted projective stacks. We leave this as a conjecture in this article.
		
		In section 1 we recall some general observations about quotient categories.
		In section 2 we establish a technical framework for working with ample bundles
		on smooth projective stacks.
		In section 3 we use this framework to prove that for the tangent bundle of any weighted projective stack is ample.
		\section*{Acknowledgement}
		The author would like to thank 
		Dmitriy Rumynin
		for his constant help and support.
		
		\section{Properties of the quotient category}
		
		Let $\bK$ be a field. All our modules are graded modules and homomorphisms are graded module homomorphisms. 
		\subsection{Quotient stack}
		
		Let $Y$ be a smooth algebraic variety with an action of an algebraic group $G$. $\cO_{[X]}$-modules on the quotient stack $[X]=[Y/G]$ can be understood in terms of $G$-equivariant $\cO_Y$-modules \cite{EHR}.
		
		There are different notions of a projective stack, for instance,
		a stack whose coarse moduli space is a projective variety.
		Here we use a more restrictive notion:
		a projective stack is a smooth closed substack of a weighted
		projective stack \cite{Zho}. Let us spell it out.
		Let $V=\oplus_{k=1}^m V_k$ be a positively graded 
		$n+1$-dimensional $\bK$-vector space. Naturally
		we treat it as a $\bG_m$-module with positive weights by
		$\lambda \bullet \vv_k = \lambda^k \vv_k$ where $\vv_k \in V_k$.
		Let $Y$ be a smooth closed $\bG_m$-invariant subvariety of $V\setminus
		\{0\}$. We define {\em a projective stack} as the stack $[X]=[Y/\bG_m]$.
		The G.I.T.-quotient $X=Y//\bG_m$ is the coarse moduli space of $[X]$.
		
		\subsection{Coherent sheaves on projective stacks}
		
		Let us describe the category of $\cO_{[X]}\Qc$ of quasicoherent sheaves on $[X]$.
		Choose a homogeneous basis $\ve_i$ on $V$ 
		with $\ve_i\in V_{d_i}$,
		$i=0,1, \ldots, n$.
		Let $\vx_i \in V^\ast$ be the dual basis. Then 
		$\bK[V] = \bK [\vx_0,...,\vx_n ]$ possesses a natural grading
		with  $\deg(\vx_{i})=d_{i}$.
		Let $I$ be the defining ideal of $\overline{Y}$. Since $Y$ is $\bG_m$-invariant,
		the ideal $I$ and the ring
		$$
		A:=\bK[Y]= \bK[\overline{Y}] = \bK [\vx_0,...,\vx_n ]/I
		$$
		are graded.
		Both $X$ and $[X]$ can be thought of as the projective spectrum of
		$A$.
		The scheme $X$ is naturally isomorphic to the scheme theoretic 
		$\Proj\, A$.
		The stack $[X]$ is the Artin-Zhang projective spectrum
		$\Proj_{AZ} A$ \cite{AKO}, 
		i.e. its category of quasicoherent sheaves
		$\cO_{[X]}\Qc$ is equivalent to the quotient category
		$A\Gr/A\Tor$ where
		$A\Gr$ is the category of
		$\bZ$-graded $A$-modules, 
		$A\Tor$ is its full subcategory of torsion modules (we
		identify the objects of $\cO_{[X]}\Qc$ and $A\Gr/A\Tor$).
		
		Recall that 
		$$
		\tau (M) = \{\vm \in M \,\mid\, \exists N \; \forall k>N \; A_k\vm=0\}
		$$
		is {\em the torsion submodule of} $M$. $M$ is said to be {\em torsion} 
		if $\tau (M)=M$. It can be seen as well that the torsion submodule of $M$ is the sum of all the finite dimensional submodules of $M$ since $A$ is connected.
		
		Denote by
		$$
		\pi_A:A\Gr \rightarrow A\Gr/A\Tor
		$$
		the quotient functor. Since $A\Gr$ has enough injectives and $A\Tor$ is dense then there exists a section functor 
		$$
		\omega_A:A\Gr/A\Tor \rightarrow A\Gr
		$$ which is right adjoint to $\pi_A$ in the sense that 
		$$
		\Hom_{A\mbox{\tiny \Gr}}(N,\omega_A(\mathcal{M}))\cong\Hom_{A\mbox{\tiny\Gr}/A\mbox{\tiny\Tor}}(\pi_A(N),\mathcal{M}).
		$$
		Recall as well that $\pi_A$ is exact and that $\omega_A$ is left exact.
		We call $\omega_A\pi_A(M)$ the {\em $A$-saturation} of $M$. We say that a module is {\em $A$-saturated} is it is isomorphic to the saturation of a module. It can be easily seen that a saturated module is the saturation of itself and is torsion-free (from the adjunction). If $M$ and $N$ are $A$-saturated, then being isomorphic in $A\Gr/A\Tor$ is equivalent to being isomorphic in $A\Gr$.
		
		The $\cO_{[X]}$-module $\cO_{[X]}(k)$ is defined
		as $\Sat(A[k])$ where $A[k]$ is the shifted regular module and the grading is given by $A[k]_m = A_{k+m}$.
		
		In particular, $A[k]$ is $A$-saturated if $A$ is a polynomial rings of more than two variables \cite{AZ}. A well-known example of a ring which isn't $A$-saturated is the polynomial ring of one variable $A=\bK[x]$ which $A$-saturation is given by $\bK[x,x^{-1}]$.
		
		\subsection{Tensor product}
		
		Let $M$ and $N$ be two $A$-modules, then $M\otimes_A N$ possesses a natural $A$-module structure. We want to induce on the quotient category $A\Gr/A\Tor$ a structure of a symmetric monoidal category. Consider the full subcategory of $A$-saturated modules $A\Sa$. The essential image of the section functor $\omega_A$ consists precisely of the $A$-saturated modules. Thus $$\omega_A:  A\Gr/A\Tor \rightarrow  A\Gr$$ becomes full and faithful onto its image. Indeed
		$$
		\Hom_{A\mbox{\tiny\Gr}/A\mbox{\tiny\Tor}}(\pi_A(M),\pi_A(N)) \cong \Hom_{A\mbox{\tiny\Gr}}(M,N) 
		$$
		if $N$ is $A$-saturated. So now, we identify the quotient category with its image and call its objects sheaves which we denote by curly letters. If $N$ is a finitely generated $A$-module, then $\cN=\pi_A(N)$ is said to be a \emph{coherent} $\cO_{[X]}$-module. The definition makes sense since the $A$-saturation of a finitely generated $A$-module $N$ is finitely generated. Indeed, It was proved \cite{AZ} that for any graded $A$-module $N$ we have:
		$$
		0  \rightarrow \tau_A(N) \rightarrow N \rightarrow \omega_A\pi_A(N) \rightarrow R^{1}\tau_A(N) \rightarrow 0.
		$$
		where $\tau_A(N)$ and $R^{1}\tau_A(N)$ are torsion $A$-modules. Since the localisation functor is exact and that the localisation of a torsion module is zero, then the localisation of the saturation of $N$ is isomorphic to the localisation of $N$ which in turn is finitely generated. But being finitely generated is a local property, thus the saturation of $N$ is finitely generated.
		From general localisation theory \cite{Ga}, $A\Sa$ is an abelian category (it is actually a Grothendieck category) but it is not an abelian subcategory of $A\Gr$
		\footnote{A full subcategory of an abelian category need not be an abelian subcategory}. The kernels in both categories are the same but the cokernel of two saturated $A$-modules isn't necessarily saturated.
		The saturation functor $\Sat:A\Gr \rightarrow A\Sa$ is exact and its right adjoint, namely the inclusion functor, is left exact. Moreover it preserves finite direct sums as does any additive functor in any additive category.
		
		Let $A$ be the coordinate ring of some projective stack $[X]$.  The graded global section functor $\Gamma_*:\Qcoh([X]) \rightarrow A\Gr$ and the sheafification functor $\Sh:A\Gr \rightarrow \Qcoh([X])$ induces the following equivalence of categories:
		$$
		A\Sa \cong \Qcoh([X]).
		$$
		
		There exists a symmetric monoidal structure in the category of quasicoherent sheaves on $[X]$ denoted by $\Qcoh([X])$. The latter category is equivalent to $A\Gr/A\Tor$ and hence to $A\Sa$. Note that $\Sh({M}) \otimes \Sh({N}) \cong \Sh({M \otimes_A N})$ in $\Qcoh([X])$. So,
		
		\begin{align*}
		\Gamma_*(\Sh({M}) \otimes \Sh({{N}}))
		&\cong \Gamma_*(\Sh{(M \otimes_A N})) \\
		&\cong \Sat(M \otimes_A N)
		\end{align*}
		where all the isomorphisms are natural (to preserve the symmetric monoidal structure).
		
		We can now define a tensor product: take $\cM$ and $\cN$ in $A\Sa$ and let
		$$
		\cM \otimes \cN := \Sat(M\otimes_A N).
		$$
		where as objects $\cM=\Sat(M)$ and $\cN=\Sat(N)$.
		Since $\Sat$ and the tensor product of graded modules is right-exact so is the tensor product defined on $A\Sa$.
		
		\section{Ample vector bundles}
		
		We want to define a notion of vector bundles of finite rank that we shall call equivalently locally free sheaf of finite rank which will be defined purely in cohomological terms.
		
		We have a \emph{internal} Hom defined on $A\Sa$ defined as follow: 
		$$
		\underline{\Hom}_{\mbox{\tiny A\Sa}}(\cM,\cN)= \bigoplus_{k \in \bZ} \Hom_{\mbox{\tiny A\Sa}}(\cM,\cN[k])
		$$
		where as objects $\cN[k]=\Sat(N[k])$ (saturation is preserved under shifts).
		
		The injective objects in $A\Sa$ are the injective torsion-free $A$-modules in $A\Gr$ (they are all saturated) and from standard localisation theory $A\Sa$ has enough injectives \cite{Ga}. Moreover an injective object in $A\Gr$ can be decomposed as a direct sum of an injective torsion-free $A$-module and an injective torsion  $A$-modules determined up to isomorphism \cite{AZ}. So the injective resolution of a $A$-module $N$, say $E^\bullet(N)$, is equal to $Q^\bullet(N) \oplus I^\bullet(N)$ where $Q^\bullet(N)$ is the saturated torsion free part and $I^\bullet(N)$ the torsion free part. Supposing from now on that $M$ is a finitely generated graded $A$-module then:
		\begin{align*}
		\Ext^i_{\mbox{\tiny A\Sa}}(\cM,\cN) &= R^i\Hom_{\mbox{\tiny A\Sa}}(\cM,\_)(\cN) \\
		&\cong \h^i(\Hom_{\mbox{\tiny A\Gr}} (M,Q^\bullet(N)))
		\end{align*}
		Graded Ext is defined as follows:
		\begin{align*}
		\underline{\Ext}^i_{\mbox{\tiny A\Sa}}(\cM,\cN) &= \bigoplus_{k \in \bZ} \Ext^i_{\mbox{\tiny A\Sa}}(\cM,\cN[k]) \\
		&\cong \h^i(\underline{\Hom}_{\mbox{\tiny A\Gr}}(M,Q^\bullet(N)))
		\end{align*}
		which is the $i$th right derived functor of $\underline{\Hom}$.
		We can endow the graded Ext in $A\Sa$ with the structure of a graded $A$-module and define the sheafified version of graded Ext as follows
		$$
		\cExt^i(\cM,\cN) := \Sat(\underline{\Ext}^i_{\mbox{\tiny A\Sa}}(\cM,\cN))
		$$ 
		where $\cM$ and $\cN$ are objects in $A\Sa$ (and as object in $A\Gr$ $\pi_A(M)=\cM$ and $M$ is finitely generated).
		
		Let $X$ be a smooth projective variety and $\cE$ a vector bundle (of finite rank). Equally, $\cE$ is a locally free sheaf which is equivalent to asking that for all $x\in X$ the stalk $\cE_x$ is a free module of finite rank over the regular local ring $\cO_x$. But $\cE_x$ is a free module if and only if $\Ext^i_{\cO_x}(\cE_x,\cO_x)=0$ for all $i>0$. Since $\Ext^i_{\cO_x}(\cE_x,\cO_x)\cong \mathcal{E}xt^i_{\cO}(\cE,\cO)_x$ for all $x\in X$, then $\cE$ is a vector bundle if and only if $\mathcal{E}xt^i_{\cO}(\cE,\cO)=0$ for all $i>0$. This justify the next definition,
		
		\begin{defn}
			Let $\cM$ be a coherent sheaf. $\cM$ is a {\bf vector bundle} or {\bf a locally free sheaf} if
			$$
			\cExt^i(\cM,\cO)=0
			$$
			for all $i>0$ where $\cO:=\Sat(A)$.
		\end{defn}
		
		For example, if $[X]$ is a weighted projective stack of dimension greater than 2 then $A$ is a graded polynomial ring with more than 2 variables. In this case, it is known that $\cO=A$ \cite{AZ}. But since $\cO(k)$ is projective then $\cO(k)$ is locally free for all $k$.
		
		\begin{defn}
			A sheaf $\cM$ is said to be {\bf weighted globally generated} if there exists an epimorphism 
			$$
			\bigoplus_{j = 0}^{l-1} \cO(j)^{\oplus s_j} \rightarrow \cM \rightarrow 0
			$$
			for some non-negative $s_j$'s with $l=\lcm(d_0,...,d_n)$.
		\end{defn}
		
		In the case where all the weights are one, the least common multiple is equal to one and we recover the definition of globally generated sheaves adopted for projective varieties.
		
		\begin{prop}
			\begin{enumerate}
				\item Any quotient of a weighted globally generated sheaf is weighted globally generated.
				\item The direct sum of two weighted globally generated sheaves is weighted globally generated.
				\item For all $k\geqslant 0$, $\cO(k)$ is weighted globally generated.
				\item The tensor product of two weighted globally generated sheaves is weighted globally generated.
			\end{enumerate}
		\end{prop}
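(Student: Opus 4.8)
The plan is to prove the four statements in the listed order, since (3) will rest on (1) and (2), and (4) on all of (1)--(3). Parts (1) and (2) are formal. For (1), compose the defining epimorphism $\bigoplus_{j=0}^{l-1}\cO(j)^{\oplus s_j}\twoheadrightarrow\cM$ with the given quotient map $\cM\twoheadrightarrow\cN$; a composite of epimorphisms is an epimorphism, so the same family $\{s_j\}$ witnesses that $\cN$ is weighted globally generated. For (2), if $\bigoplus_j\cO(j)^{\oplus s_j}\twoheadrightarrow\cM$ and $\bigoplus_j\cO(j)^{\oplus t_j}\twoheadrightarrow\cN$, then the direct sum of the two maps is an epimorphism onto $\cM\oplus\cN$ from $\bigoplus_j\cO(j)^{\oplus(s_j+t_j)}$; the identical argument covers any finite direct sum, which is what (4) will use.

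The heart of the matter is (3), which I would prove by strong induction on $k\ge 0$. The single geometric input is that, for every $k$, the multiplication-by-variables map of graded modules
$$
\varphi_k\colon\ \bigoplus_{i=0}^{n} A[k-d_i]\ \longrightarrow\ A[k],\qquad (f_i)_i\longmapsto\sum_{i=0}^{n}\vx_i f_i,
$$
has image the irrelevant ideal $\m=(\vx_0,\dots,\vx_n)=\bigoplus_{d>0}A_d$ (viewed inside $A[k]$), so that its cokernel $\coke\varphi_k\cong(A/\m)[k]$ is one-dimensional, concentrated in degree $-k$. Being finite dimensional, this cokernel is torsion, hence is annihilated by the exact functor $\pi_A$; consequently $\bigoplus_{i=0}^{n}\cO(k-d_i)\twoheadrightarrow\cO(k)$ is an epimorphism in $A\Sa$. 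Now the induction: for $0\le k\le l-1$ the sheaf $\cO(k)$ is itself one of the permitted summands, so it is trivially weighted globally generated; and for $k\ge l$ each $d_i$ divides $l$, whence $0\le k-d_i<k$, so the inductive hypothesis applies to every $\cO(k-d_i)$, part (2) makes their direct sum weighted globally generated, and part (1) applied to $\varphi_k$ transports this to $\cO(k)$.

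For (4), tensor the two defining epimorphisms. Right-exactness of $\otimes$ on $A\Sa$ (established above) shows that the tensor of two epimorphisms is again an epimorphism, giving
$$
\Bigl(\bigoplus_{j}\cO(j)^{\oplus s_j}\Bigr)\otimes\Bigl(\bigoplus_{j'}\cO(j')^{\oplus t_{j'}}\Bigr)\ \twoheadrightarrow\ \cM\otimes\cN.
$$
Distributing $\otimes$ over the direct sums and using $\cO(j)\otimes\cO(j')\cong\cO(j+j')$ --- which follows from $A[j]\otimes_A A[j']\cong A[j+j']$ together with the definition $\cM\otimes\cN=\Sat(M\otimes_A N)$ --- rewrites the source as $\bigoplus_{j,j'}\cO(j+j')^{\oplus s_j t_{j'}}$. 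Since $j+j'$ ranges up to $2l-2$, these summands are in general not among the allowed generators, and this is precisely where (3) is indispensable: each $\cO(j+j')$ with $j+j'\ge 0$ is weighted globally generated by (3), their finite direct sum is so by (2), and $\cM\otimes\cN$, being a quotient of it, is so by (1).

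I expect the only real obstacle to be the cokernel computation in (3): one must check that the image of $\varphi_k$ is exactly the irrelevant ideal and that the resulting quotient $(A/\m)[k]$ is finite dimensional, hence torsion and invisible in $A\Sa$. Everything else is bookkeeping with epimorphisms and the three closure operations --- quotients, finite direct sums, and tensor products --- and the mild point that one must track the degree shifts correctly in $\varphi_k$ and in $\cO(j)\otimes\cO(j')\cong\cO(j+j')$.
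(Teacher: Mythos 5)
Your proposal is correct, and parts (1), (2) and (4) follow the paper's own argument essentially verbatim: compose epimorphisms for quotients, sum the presenting maps for direct sums, and for (4) tensor the two presentations, distribute, use $\cO(j)\otimes\cO(j')\cong\cO(j+j')$, and then invoke (3), (2), (1) on the twists $j+j'\leqslant 2(l-1)$. Where you genuinely diverge is part (3). The paper proves it in one shot: writing $k=al+r$ with $0\leqslant r<l$, it exhibits an explicit map $\cO(r)^{\oplus(n+1)}\to\cO(k)$ sending the $j$-th generator to $x_j^{al/d_j}$ (the exponents are integers because $d_j\mid l$), and checks the cokernel is torsion by a degree estimate: every homogeneous element of sufficiently high degree is a multiple of some $x_j^{al/d_j}$, so the cokernel vanishes in high degrees. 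You instead induct on $k$, using the Euler-type surjection $\bigoplus_i \cO(k-d_i)\to\cO(k)$ whose cokernel in $A\Gr$ is $(A/\m)[k]$, visibly one-dimensional and hence torsion. Both rest on the same principle (a module map with torsion cokernel becomes an epimorphism in $A\Sa$, by exactness of $\Sat$), but the trade-offs differ: the paper's argument is self-contained and yields an explicit presentation of $\cO(k)$ by $n+1$ copies of a single twist $\cO(r)$, at the cost of a pigeonhole-style degree argument (which, as written in the paper, is literally valid only monomial-by-monomial and needs the obvious repair); your argument makes the torsion check trivial and avoids any degree estimate, at the cost of requiring strong induction and making (3) logically dependent on (1) and (2) — a dependence the paper's proof does not have, though your reordering handles it cleanly.
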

		
		\begin{proof}
			\begin{enumerate}
				\item It follows from the definition and the fact that the composition of two epimorphisms is an epimorphism.
				\item This follows immediately by definition.
				\item By the division algorithm, we know that $k=al+r$ for some non-negative integer $a$ and $0 \leqslant r < l$. We claim that the following map 
				$$
				\cO(r)^{\oplus (n+1)} \rightarrow \cO(k)
				$$
				induced by $(0,...,1_j,...,0) \mapsto x_j^{ \frac{al}{d_j}}$ is an epimorphism in $A\Sa$.
				To prove our claim we need to show that the cokernel of the map in $A\Gr$ is torsion. Take a homogeneous element $f\in A(k)$ and let $N=\max\left\lbrace  \dfrac{al}{d_j}, j\in\{0,...,n\} \right\rbrace $. Suppose that $h\in A$ is an homogeneous element of degree greater than $N$. So it can be written as $h'x_j^{ \frac{al}{d_j}}$ for some $j\in\{0,...,n\}$. It follows that 
				$hf$ is in the image of the map. Hence its cokernel is zero.
				\item Suppose that $\cM_1$ and $\cM_2$ are weighted globally generated. Then we know that 
				$$
				\bigoplus_{j = 0}^{l-1} \cO(j)^{\oplus s_j^1} \rightarrow \cM_1 \rightarrow 0 \:\:\:(1)
				$$
				and
				$$
				\bigoplus_{k = 0}^{l-1} \cO(k)^{\oplus s_k^2} \rightarrow \cM_2 \rightarrow 0 \:\:\:(2)
				$$
				Tensoring (2) by $\bigoplus_{j = 0}^{l-1} \cO(j)^{\oplus s_j^1}$ on the left and (1) by $\cM_2$ on the right, it follows that 
				$$
				\bigoplus_{j = 0}^{l-1} \cO(j)^{\oplus s_j^1} \otimes \bigoplus_{k = 0}^{l-1} \cO(k)^{\oplus s_k^2} \rightarrow \bigoplus_{j = 0}^{l-1} \cO(j)^{\oplus s_j^1} \otimes \cM_2 \rightarrow 0.
				$$
				and
				$$
				\bigoplus_{j = 0}^{l-1} \cO(j)^{\oplus s_j^1} \otimes \cM_2 \rightarrow \cM_1 \otimes \cM_2 \rightarrow 0
				$$
				Since the composition of epimorphisms is an epimorphism, we get
				$$
				\bigoplus_{j = 0}^{l-1} \cO(j)^{\oplus s_j^1} \otimes \bigoplus_{k = 0}^{l-1} \cO(k)^{\oplus s_k^2} \rightarrow \cM_1 \otimes \cM_2 \rightarrow 0.
				$$
				Therefore,
				$$
				\bigoplus_{0 \leqslant j+k \leqslant 2(l-1)} \cO(j+k)^{\oplus (s_j^1+s_k^2)} \rightarrow \cM_1 \otimes \cM_2 \rightarrow 0.
				$$
				Since each summand is weighted globally generated and that a direct sum of such is weighted globally generated, the result follows.
			\end{enumerate}
			
		\end{proof}

		In any abelian symmetric braided tensor category we can define the $n^{th}$ symmetric power functor $\Sym^n:A\Sa \rightarrow A\Sa$ as the coequalizer of all the endomorphisms $\sigma \in S_n$ of the $n^{th}$ tensor power functor $T^n$. Hence, we get the following well-known properties:
		
		\begin{prop}
			\begin{enumerate}
				\item There exists an epimorphism
				$$
				\Sym^p(\cM) \otimes \Sym^q(\cM) \twoheadrightarrow \Sym^{p+q}(\cM).
				$$
				\item There is a natural isomorphism
				$$
				\bigoplus_{p+q=n} \Sym^p(\cM) \otimes \Sym^q(\cN) \rightarrow \Sym^n(\cM\oplus \cN).
				$$
				\item The functor $\Sym^n$ preserves epimorphisms and sends coherent sheaves to coherent sheaves.
				\item There is a natural epimorphism
				$$
				\Sym^n(\cM) \otimes \Sym^n(\cN) \rightarrow \Sym^n(\cM \otimes \cN).
				$$
			\end{enumerate}
		\end{prop}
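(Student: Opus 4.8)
The plan is to derive all four statements from the corresponding facts about the $n$th tensor-power functor $T^n$ together with its $S_n$-action, and then pass to the coequalizer $\Sym^n=(T^n)_{S_n}$. Two structural observations make this essentially mechanical, and I would fix them first. First, the tensor product on $A\Sa$ is right exact (established above), so tensoring preserves coequalizers; in particular the canonical quotients $q^n_{\cM}\colon T^n(\cM)\twoheadrightarrow\Sym^n(\cM)$ stay epimorphisms after tensoring, and $\Sym^p(\cM)\otimes\Sym^q(\cN)$ is itself the cokernel of the relation maps on $T^p(\cM)\otimes T^q(\cN)$. Second, $T^n$ is an iterated right-exact tensor product, hence right exact and epimorphism-preserving.

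For (1), I would use the block inclusion $S_p\times S_q\hookrightarrow S_{p+q}$. The associativity isomorphism identifies $T^p(\cM)\otimes T^q(\cM)$ with $T^{p+q}(\cM)$, and under it every relation $\sigma\otimes\mathrm{id}$ and $\mathrm{id}\otimes\tau$ with $\sigma\in S_p$, $\tau\in S_q$ sits among the relations imposed by $S_{p+q}$. Hence the composite $T^p(\cM)\otimes T^q(\cM)\cong T^{p+q}(\cM)\twoheadrightarrow\Sym^{p+q}(\cM)$ coequalizes the relations cutting out $\Sym^p(\cM)\otimes\Sym^q(\cM)$ and therefore factors through it by the universal property of the coequalizer; the induced map is an epimorphism since the composite is.

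For (2), I would expand $T^n(\cM\oplus\cN)$ by distributivity into the direct sum over subsets $S\subseteq\{1,\dots,n\}$ of the summand whose $S$-indexed factors are $\cM$ and whose remaining factors are $\cN$; using symmetry, the summand for a subset of size $p$ is isomorphic to $\cM^{\otimes p}\otimes\cN^{\otimes q}$ with $q=n-p$. The group $S_n$ permutes these summands, acting transitively on those with $|S|=p$ with point-stabiliser $S_p\times S_q$. Since coequalizers commute with direct sums, taking coinvariants turns each orbit into $(\cM^{\otimes p}\otimes\cN^{\otimes q})_{S_p\times S_q}=\Sym^p(\cM)\otimes\Sym^q(\cN)$, assembling to the claimed natural isomorphism. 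For (3), preservation of epimorphisms is immediate from the structural facts together with the commuting square relating $q^n$ for source and target; coherence holds because $A$ is a finitely generated $\bK$-algebra, so $M^{\otimes n}$ and its quotients are finitely generated when $M$ is, and the saturation of a finitely generated module is finitely generated (proved above).

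The subtle item is (4), which I would treat last. The canonical isomorphism $T^n(\cM)\otimes T^n(\cN)\cong T^n(\cM\otimes\cN)$ intertwines the single $S_n$-action on the right with the \emph{diagonal} $S_n\subseteq S_n\times S_n$ on the left, where $S_n\times S_n$ permutes the $\cM$- and $\cN$-factors independently. Passing to coinvariants for the diagonal $S_n$ computes $\Sym^n(\cM\otimes\cN)$, and the further quotient from diagonal-coinvariants to $S_n\times S_n$-coinvariants (valid since the diagonal is a subgroup) produces a natural map landing in $\Sym^n(\cM)\otimes\Sym^n(\cN)$. The main obstacle is precisely the direction of this arrow: the construction yields a natural epimorphism $\Sym^n(\cM\otimes\cN)\twoheadrightarrow\Sym^n(\cM)\otimes\Sym^n(\cN)$, and a rank count on free modules (for instance $\cM=\cN=\cO^{\oplus 2}$ and $n=2$, where the ranks are $10$ and $9$) shows no epimorphism can run the opposite way. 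I would therefore read the stated arrow as this epimorphism with source and target interchanged, or else record the map in the written direction as the canonical monomorphism; either way everything reduces to the two structural facts fixed at the outset.
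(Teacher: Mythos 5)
Your proofs of (1)--(3) are correct, and they follow exactly the route the paper intends but never writes down: the paper offers no proof of this proposition at all, asserting the four items as ``well-known properties'' of the coequalizer definition of $\Sym^n$. Your two structural observations (right-exactness of the tensor product on $A\Sa$, so that tensoring preserves coequalizers and exhibits $\Sym^p(\cM)\otimes\Sym^q(\cN)$ as the quotient of $T^p(\cM)\otimes T^q(\cN)$ by the $S_p\times S_q$ relations) are precisely what is needed to make that assertion honest. For the coherence claim in (3), note that you are also implicitly using that $\Sat$, being exact and left adjoint to the inclusion $A\Sa\hookrightarrow A\Gr$, commutes with coequalizers, so that $\Sym^n(\cM)\cong\Sat(\mbox{\rm S}^n(M))$; this is exactly the content of the proposition the paper states immediately afterwards, and it reduces coherence to the fact, proved in Section 1, that the saturation of a finitely generated module is finitely generated.

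On (4) you have caught a genuine error in the paper's statement, not a gap in your own argument. With $\Sym^n$ defined by coinvariants, the canonical isomorphism $T^n(\cM)\otimes T^n(\cN)\cong T^n(\cM\otimes\cN)$ identifies the $S_n$-action on the right with the diagonal subgroup of $S_n\times S_n$ acting on the left, and passing from diagonal coinvariants to full $S_n\times S_n$-coinvariants yields a natural epimorphism $\Sym^n(\cM\otimes\cN)\twoheadrightarrow\Sym^n(\cM)\otimes\Sym^n(\cN)$ --- the reverse of what is printed. Your rank count is decisive: for $\cM=\cN=\cO^{\oplus 2}$ and $n=2$ the two sides are $\cO^{\oplus 10}$ and $\cO^{\oplus 9}$, and an epimorphism $\cO^{\oplus 9}\twoheadrightarrow\cO^{\oplus 10}$ in $A\Sa$ would force a torsion, hence rank-zero, cokernel for a graded map $A^9\to A^{10}$, contradicting ranks at the generic point; this obstruction is characteristic-free. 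It is worth adding that the paper's only subsequent use of (4) is in the corollary on $\cM\otimes\cO(j)$, where the second factor is invertible; there $\Sym^n(\cM\otimes\cO(j))\cong\Sym^n(\cM)\otimes\cO(nj)\cong\Sym^n(\cM)\otimes\Sym^n(\cO(j))$, so the two sides coincide and the corollary survives the correction. As a statement about general coherent sheaves, however, (4) must be read with the arrow reversed, exactly as you say.
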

		
		\begin{prop}
			Let $\cM\in A\Sa$, then $\Sym^n(\cM) \cong \Sat( \mbox{\rm S}^n(M))$ where $\mbox{\rm S}^n$ is the $n^{th}$ symmetric power taken in $A\Gr$ and $\cM=\Sat(M)$.
		\end{prop}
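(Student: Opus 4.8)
The plan is to reduce everything to the corresponding construction in $A\Gr$ by exploiting two facts. First, the saturation functor $\Sat\colon A\Gr\to A\Sa$ is a left adjoint, namely to the inclusion $A\Sa\hookrightarrow A\Gr$, and hence preserves all colimits, in particular coequalizers. Second, the symmetric monoidal structure on $A\Sa$ is by construction transported from that of $A\Gr$ along $\Sat$, in the sense that $\cM\otimes\cN=\Sat(M\otimes_A N)$ and the braiding on $A\Sa$ is the image under $\Sat$ of the symmetry $M\otimes_A N\cong N\otimes_A M$. Granting these, $\Sym^n(\cM)$ is a coequalizer in $A\Sa$ of a diagram that is $\Sat$ applied to the diagram computing $\mathrm{S}^n(M)$ in $A\Gr$, and the identity $\Sym^n(\cM)\cong\Sat(\mathrm{S}^n(M))$ then follows formally.

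The one genuinely technical point, and the step I expect to be the main obstacle, is to identify the $n$-fold tensor power $T^n(\cM)$ formed with the induced product on $A\Sa$ with $\Sat(M^{\otimes_A n})$, where $M^{\otimes_A n}$ is the ordinary $n$-fold tensor power in $A\Gr$. The difficulty is that, since $\cM\otimes\cN=\Sat(M\otimes_A N)$, iterating the product re-saturates at every stage, and one must check that this repeated saturation does not change the answer. The key lemma is that for any graded modules $N,P$ there is a natural isomorphism
$$
\Sat(\Sat(N)\otimes_A P)\;\cong\;\Sat(N\otimes_A P).
$$
To prove it I would apply $-\otimes_A P$ to the four-term exact sequence $0\to\tau_A(N)\to N\to\Sat(N)\to R^{1}\tau_A(N)\to 0$ of \cite{AZ}. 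Because $-\otimes_A P$ is right exact, and because torsion modules form a Serre subcategory stable under tensoring with and taking $\mathrm{Tor}$ against an arbitrary module (the tensor or homology of a torsion module with any module is again torsion, being computed from direct sums and subquotients of torsion modules), the kernel and cokernel of $N\otimes_A P\to\Sat(N)\otimes_A P$ are torsion. Hence $\pi_A(N\otimes_A P)\cong\pi_A(\Sat(N)\otimes_A P)$, and applying $\omega_A$ together with $\Sat=\omega_A\pi_A$ yields the claim.

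Iterating this lemma gives $T^n(\cM)\cong\Sat(M^{\otimes_A n})$ naturally in $\cM$, and the same torsion-theoretic bookkeeping identifies the permutation maps $\sigma$ on $T^n(\cM)$ with the images under $\Sat$ of the permutation maps on $M^{\otimes_A n}$. Finally, realising the joint coequalizer of the $S_n$-action in the abelian category $A\Sa$ as the cokernel of the morphism $\bigoplus_{\sigma\in S_n}T^n(\cM)\to T^n(\cM)$ with components $\sigma-\mathrm{id}$, and using that $\Sat$, being exact and a left adjoint, preserves cokernels and direct sums, I obtain
$$
\Sym^n(\cM)=\coke\Big(\textstyle\bigoplus_{\sigma}(\sigma-\mathrm{id})\Big)\cong\Sat\Big(\coke\big(\textstyle\bigoplus_{\sigma}(\sigma-\mathrm{id})\big)\Big)\cong\Sat(\mathrm{S}^n(M)),
$$
where the middle cokernel is now taken in $A\Gr$ and computes $\mathrm{S}^n(M)$. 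This is the desired isomorphism.
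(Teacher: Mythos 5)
Your proof is correct, and its underlying idea --- transport the symmetric monoidal structure of $A\Gr$ along $\Sat$ and commute $\Sat$ past the coequalizer defining the symmetric power --- is exactly what the paper's own justification appeals to. The difference is one of completeness: the paper offers no argument beyond the remark that the tensor product was built so that ``each transposition acts by switching tensorands before saturation'', whereas you isolate and prove the two facts that make this rigorous, namely (i) that $\Sat$, being exact and left adjoint to the inclusion $A\Sa \hookrightarrow A\Gr$, preserves finite direct sums and cokernels, hence the realisation of $\Sym^n$ as $\coke\bigl(\bigoplus_{\sigma}(\sigma-\mathrm{id})\bigr)$, and (ii) the re-saturation lemma $\Sat(\Sat(N)\otimes_A P)\cong\Sat(N\otimes_A P)$ controlling the iterated tensor powers. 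You should be aware that (ii) is not absent from the paper: it is precisely the proposition stated and proved immediately after this one (credited to \cite{GP}), and your proof of it --- tensoring the four-term exact sequence of \cite{AZ} with $P$, using that $A\Tor$ is a Serre subcategory closed under $\otimes$ and $\mathrm{Tor}$ against arbitrary modules, then applying $\pi_A$ and $\omega_A$ --- is essentially the argument given there. So in effect your write-up absorbs that later proposition as a lemma and then derives the present statement formally from it; this buys a self-contained and genuinely complete proof of a statement the paper only sketches, at the cost of duplicating material the paper chose to factor out separately.
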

		
		This results holds because of the definition of our tensor product in $A\Sa$, we preserved the monoidal symmetric structure and now each transposition acts by switching tensorands before saturation. More generally, it should be noted that saturating a module corresponds to the sheafification of a presheaf.
		
		We give a more detailed proof of the following proposition first given in \cite{GP}.
		
		\begin{prop}
			For all $M$, $N$ be two modules. We have
			$$
			\Sat(\Sat(M)\otimes \Sat(N)) \cong \Sat(M\otimes_A N).
			$$
		\end{prop}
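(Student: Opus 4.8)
The plan is to reduce the statement to a computation inside the quotient category $A\Gr/A\Tor$, where torsion modules vanish. Since $\Sat=\omega_A\pi_A$ by definition, both sides of the claimed isomorphism have the form $\omega_A\pi_A(-)$; because $\omega_A$ is a functor it carries isomorphisms to isomorphisms, so it suffices to produce a natural isomorphism
$$
\pi_A\big(\Sat(M)\otimes_A\Sat(N)\big)\;\cong\;\pi_A(M\otimes_A N)
$$
in $A\Gr/A\Tor$ and then apply $\omega_A$. By symmetry this in turn follows once I show that for any module $N$ the unit map $\eta_M\colon M\to\Sat(M)$ induces an isomorphism $\pi_A(M\otimes_A N)\to\pi_A(\Sat(M)\otimes_A N)$: applying this once in each tensor slot chains the two isomorphisms together.

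The key technical point — and the step I expect to be the main obstacle — is that tensoring, or taking $\operatorname{Tor}$ against, a torsion module again yields a torsion module, so that such contributions become invisible after the exact functor $\pi_A$. For the plain tensor product this is elementary: if $T$ is torsion then every simple tensor $t\otimes n$ is killed by $A_{\geq s}$ for $s\gg0$, and a finite sum is killed by a common such $A_{\geq s}$, so $T\otimes_A N$ is torsion. For the higher terms I would take a free resolution $F_\bullet\to N$; each $T\otimes_A F_j$ is a direct sum of shifts of $T$, hence torsion, and $\operatorname{Tor}_i^A(T,N)=\h^i(T\otimes_A F_\bullet)$ is a subquotient of a torsion module. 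Since the torsion modules form a Serre subcategory of $A\Gr$ (closed under subobjects, quotients and arbitrary direct sums), it follows that $\operatorname{Tor}_i^A(T,N)$ is torsion for all $i$.

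With this in hand the computation is routine. I would split the four-term exact sequence
$$
0\to\tau_A(M)\to M\xrightarrow{\ \eta_M\ }\Sat(M)\to R^1\tau_A(M)\to 0
$$
recalled earlier from \cite{AZ} (with $\tau_A(M)$ and $R^1\tau_A(M)$ torsion) into the short exact sequences $0\to\tau_A(M)\to M\to J\to0$ and $0\to J\to\Sat(M)\to R^1\tau_A(M)\to0$, where $J=\Ima(\eta_M)$. Tensoring the first with $N$ and using right-exactness gives an exact sequence $\tau_A(M)\otimes_A N\to M\otimes_A N\to J\otimes_A N\to0$ whose left term is torsion, so applying the exact functor $\pi_A$ yields $\pi_A(M\otimes_A N)\cong\pi_A(J\otimes_A N)$. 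Tensoring the second with $N$ produces the $\operatorname{Tor}$ long exact sequence
$$
\operatorname{Tor}_1^A\!\big(R^1\tau_A(M),N\big)\to J\otimes_A N\to\Sat(M)\otimes_A N\to R^1\tau_A(M)\otimes_A N\to0,
$$
in which both outer terms are torsion by the lemma above, so $\pi_A(J\otimes_A N)\cong\pi_A(\Sat(M)\otimes_A N)$. Composing these gives the desired one-slot isomorphism, and applying the same argument in the second slot and then $\omega_A$ completes the proof.
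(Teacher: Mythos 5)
Your proposal is correct and follows essentially the same route as the paper's own proof: the same four-term sequence from \cite{AZ} split at the image $M/\tau_A(M)$, the same key lemma that $\operatorname{Tor}_i^A$ against a torsion module is torsion (via a resolution of $N$ and the fact that tensoring preserves torsion), and the same two-step chaining followed by the symmetric argument in the second slot. Your phrasing via $\pi_A$ and then applying $\omega_A$, together with the explicit Serre-subcategory justification, is just a slightly more careful packaging of what the paper does by invoking exactness of $\Sat$ viewed as a functor into $A\Sa$.
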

		\begin{proof}
			Consider the following exact sequence in $A\Gr$ \cite{AZ}:
			$$
			0 \rightarrow \tau(M) \rightarrow M \rightarrow \Sat(M) \rightarrow R^{1}\tau(M) \rightarrow 0
			$$
			where $\tau(M)$ is the largest torsion submodule of $M$. The saturation of $M$, denoted by $\widetilde{M}$, is the maximal essential extension of $M/\tau(M)$ such that $\tM/(M/\tau(M))$ is in $A\Tor$. So we have 
			$$
			0  \rightarrow M/\tau(M) \rightarrow \Sat(M) \rightarrow T \rightarrow 0
			$$
			where $T$ is in $A\Tor$. Applying by $\_\otimes_AN$ we obtain
			$$	
			... \rightarrow \textrm{Tor}_1^A(T,N)  \rightarrow M/\tau(M)\otimes_AN \rightarrow \Sat(M)\otimes_AN \rightarrow T\otimes_AN \rightarrow 0.
			$$
			From the properties of the $\textrm{Tor}$ functor, it is known that $\textrm{Tor}_1^A(T,N) \cong \textrm{Tor}_1^A(N,T)$. Now taking a projective resolution of $N$ and tensoring by $T$ we get a complex of objects in $A\Tor$ since tensor product preserves torsion. Therefore  $\textrm{Tor}_1^A(T,N)$ is in $A\Tor$. The saturation functor is exact and the saturation of torsion objects is the zero object, so we get a short exact sequence
			$$
			0  \rightarrow \Sat(M/\tau(M)\otimes_AN) \rightarrow \Sat(\Sat(M)\otimes_AN) \rightarrow 0.
			$$
			And hence an isomorphism
			$$
			\Sat(M/\tau(M)\otimes_AN) \cong \Sat(\Sat(M)\otimes_AN).
			$$
			Moreover we have the following short exact sequence
			$$
			0  \rightarrow \tau(M) \rightarrow M \rightarrow M/\tau(M) \rightarrow 0.
			$$
			Tensoring on the left by $N$ we get
			$$
			\tau(M)\otimes_AN \rightarrow M\otimes_AN \rightarrow M/\tau(M)\otimes_AN \rightarrow 0.
			$$
			Since $\tau(M)\otimes_AN$ is torsion, applying the saturation functor we obtain
			$$
			0 \rightarrow \Sat(M\otimes_AN) \rightarrow \Sat(M/\tau(M)\otimes_AN) \rightarrow 0.
			$$
			And hence an isomorphism
			$$
			\Sat(M\otimes_AN) \cong \Sat(M/\tau(M)\otimes_AN)
			$$
			So,
			$$
			\Sat(M\otimes_AN) \cong \Sat(\Sat(M)\otimes_AN)
			$$
			To conclude,
			\begin{align*}
			\Sat(M\otimes_AN) &\cong \Sat(\Sat(M)\otimes_AN) \\
			&\cong \Sat(N \otimes_A \Sat(M)) \\
			&\cong \Sat(\Sat(N) \otimes_A \Sat(M)) \\
			&\cong \Sat(\Sat(M) \otimes_A \Sat(N))
			\end{align*}
		\end{proof}
		
		\begin{defn}
			A vector bundle $\cM$ is {\bf ample} if for any coherent sheaf $\cF$ there exists $n_0>0$ such that $$\cF\otimes \Sym^n(\cM)$$ is weighted globally generated for all $n \geqslant n_0$.
		\end{defn}
		
		\begin{prop}
			\begin{enumerate}
				\item For any ample sheaf, there exists a non-negative integer such that any of its higher symmetric power is weighted globally generated.
				\item Any quotient sheaf of an ample sheaf is ample.
			\end{enumerate}
		\end{prop}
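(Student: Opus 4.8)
The plan is to deduce both statements from the definition of ampleness together with the structural properties of $\Sym^n$ and of weighted globally generated sheaves established above.

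For part (1), the natural move is to feed the structure sheaf itself into the definition of ampleness. Taking $\cF = \cO$, the definition produces an integer $n_0 > 0$ such that $\cO \otimes \Sym^n(\cM)$ is weighted globally generated for all $n \geqslant n_0$. It then remains to identify $\cO$ as the unit for the tensor product on $A\Sa$. Since $\cO = \Sat(A)$ and $\Sym^n(\cM) \cong \Sat(\mbox{\rm S}^n(M))$, the definition of the tensor product gives $\cO \otimes \Sym^n(\cM) \cong \Sat(A \otimes_A \mbox{\rm S}^n(M)) \cong \Sat(\mbox{\rm S}^n(M)) \cong \Sym^n(\cM)$, using $A \otimes_A \mbox{\rm S}^n(M) \cong \mbox{\rm S}^n(M)$. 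Hence $\Sym^n(\cM)$ is itself weighted globally generated for all $n \geqslant n_0$, which is exactly the assertion.

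For part (2), suppose $\cM \twoheadrightarrow \cN$ is an epimorphism and let $\cF$ be an arbitrary coherent sheaf; the task is to produce $n_0$ making $\cF \otimes \Sym^n(\cN)$ weighted globally generated for all $n \geqslant n_0$. First, since $\Sym^n$ preserves epimorphisms, the induced map $\Sym^n(\cM) \twoheadrightarrow \Sym^n(\cN)$ is again an epimorphism. Next, because the tensor product on $A\Sa$ is right-exact, tensoring with $\cF$ preserves surjectivity, so $\cF \otimes \Sym^n(\cM) \twoheadrightarrow \cF \otimes \Sym^n(\cN)$ is an epimorphism. Now I invoke ampleness of $\cM$ for this same $\cF$ to obtain $n_0$ for which $\cF \otimes \Sym^n(\cM)$ is weighted globally generated whenever $n \geqslant n_0$. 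Finally, since any quotient of a weighted globally generated sheaf is weighted globally generated, the target $\cF \otimes \Sym^n(\cN)$ inherits the property for all $n \geqslant n_0$. As $\cF$ was arbitrary, $\cN$ is ample.

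Both arguments are essentially a matter of substituting the right objects into the machinery already assembled, so I do not expect a genuine obstacle. The one point needing mild care is the identification of $\cO$ as the tensor unit in part (1): one should argue at the level of the graded modules $A$ and $\mbox{\rm S}^n(M)$ and then pass through the saturation description of $\Sym^n$, rather than attempting to manipulate the sheaf-level symmetric power directly.
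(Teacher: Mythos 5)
Your proposal is correct and follows essentially the same route as the paper: part (1) plugs $\cF=\cO$ into the definition of ampleness and identifies $\cO\otimes\Sym^n(\cM)\cong\Sym^n(\cM)$, and part (2) combines the facts that $\Sym^n$ preserves epimorphisms, that $\cF\otimes\_$ is right exact, and that quotients of weighted globally generated sheaves are weighted globally generated. The only difference is that you justify the tensor-unit isomorphism $\cO\otimes\Sym^n(\cM)\cong\Sym^n(\cM)$ via $\Sat(A\otimes_A \mbox{\rm S}^n(M))$, a detail the paper simply asserts.
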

		\begin{proof}
			\begin{enumerate}
				\item Suppose $\cM$ is ample, since $\cO$ is a coherent sheaf then there exist a non-negative $n_0$ such that for all $n \geqslant n_0$
				$$
				\cO \otimes \Sym^n(\cM) \cong \Sym^n(\cM)
				$$
				is weighted globally generated.
				\item For a given sheaf $\cF$, the functor $\cF \otimes \_$ is right exact as a composition of a right exact functor and an exact functor. Let $\cM'$ be a quotient of $\cM$, i.e., we have an epimorphism $\cM \twoheadrightarrow \cM'$. Since $\Sym^n$ preserves epimorphisms we have $$\Sym^n(\cM) \twoheadrightarrow \Sym^n(\cM'),$$ so $$\cF \otimes\Sym^n(\cM) \twoheadrightarrow \cF \otimes\Sym^n(\cM')$$ for any coherent sheaf $\cF$. But $\cM$ is ample, so for $n$ sufficiently large $\cF \otimes\Sym^n(\cM)$ is weighted globally generated and since $\cF \otimes\Sym^n(\cM) \twoheadrightarrow \cF \otimes\Sym^n(\cM')$ is an epimorphism then $\cF \otimes\Sym^n(\cM')$ is weighted globally generated. This shows that $\cM'$ is ample.
			\end{enumerate}	
			
		\end{proof}
		
		\begin{prop}
			The finite direct sum of ample sheaves is ample.
		\end{prop}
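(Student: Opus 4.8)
The plan is to reduce to the case of two summands by induction on the number of factors, and then prove that if $\cM$ and $\cN$ are ample, so is $\cM\oplus\cN$. First I would record that $\cM\oplus\cN$ is again a vector bundle: since $\cExt^i(-,\cO)$ is additive in its first argument, $\cExt^i(\cM\oplus\cN,\cO)\cong\cExt^i(\cM,\cO)\oplus\cExt^i(\cN,\cO)=0$ for $i>0$. The engine of the argument is the decomposition from the symmetric-power proposition, $\Sym^n(\cM\oplus\cN)\cong\bigoplus_{p+q=n}\Sym^p(\cM)\otimes\Sym^q(\cN)$, so that for any coherent $\cF$, using that $\cF\otimes(-)$ preserves finite direct sums,
\[
\cF\otimes\Sym^n(\cM\oplus\cN)\cong\bigoplus_{p+q=n}\cF\otimes\Sym^p(\cM)\otimes\Sym^q(\cN).
\]
Since a finite direct sum of weighted globally generated sheaves is weighted globally generated, it suffices to show that every summand on the right is weighted globally generated once $n$ is large enough.

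To obtain uniform bounds I would fix $\cF$ and introduce four constants. From ampleness of $\cM$ applied to $\cF$, choose $u$ with $\cF\otimes\Sym^p(\cM)$ weighted globally generated for all $p\geq u$. From ampleness of $\cN$ (the earlier proposition on higher symmetric powers), choose $t$ with $\Sym^q(\cN)$ weighted globally generated for all $q\geq t$. For each of the finitely many $q\in\{0,\dots,t-1\}$ the sheaf $\mathcal{G}_q:=\cF\otimes\Sym^q(\cN)$ is coherent (both $\Sym^q$ and the tensor product preserve coherence), so ampleness of $\cM$ gives a bound $u_q$ with $\mathcal{G}_q\otimes\Sym^p(\cM)$ weighted globally generated for $p\geq u_q$; set $U:=\max_q u_q$. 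Symmetrically, for each $p\in\{0,\dots,u-1\}$ the coherent sheaf $\mathcal{H}_p:=\cF\otimes\Sym^p(\cM)$ yields, via ampleness of $\cN$, a bound $v_p$, and I set $V:=\max_p v_p$. Finally put $n_0:=\max\{t+U,\;u+V\}$.

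Then for $n\geq n_0$ and any $p+q=n$ I would split into three cases. If $p<u$, then $q=n-p>V\geq v_p$, so the summand $\mathcal{H}_p\otimes\Sym^q(\cN)$ is weighted globally generated. If $p\geq u$ but $q<t$, then $p=n-q>U\geq u_q$, so $\mathcal{G}_q\otimes\Sym^p(\cM)$ is weighted globally generated. In the remaining case $p\geq u$ and $q\geq t$, I write the summand as $(\cF\otimes\Sym^p(\cM))\otimes\Sym^q(\cN)$, a tensor product of two weighted globally generated sheaves, which is weighted globally generated by the earlier proposition. These three cases are exhaustive, so every summand, and hence the whole direct sum, is weighted globally generated; thus $\cM\oplus\cN$ is ample, and the general statement follows by induction.

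The main obstacle, and the reason the three-case split is needed, is the regime where both $p$ and $q$ are large: there the factor $\cF$ cannot simply be carried along, since tensoring a weighted globally generated sheaf by an arbitrary coherent sheaf need not preserve weighted global generation. The fix is to absorb $\cF$ into the ample factor $\cM$ first, using $p\geq u$, so that both tensorands are genuinely weighted globally generated. The only delicate point in the bookkeeping is the uniformity of the bound over the infinitely many pairs $(p,q)$ with $p+q=n$; this is exactly what the finiteness of the two ``small index'' families $\{q<t\}$ and $\{p<u\}$ provides, reducing infinitely many potentially problematic summands to finitely many applications of ampleness.
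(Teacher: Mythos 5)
Your proof is correct and follows essentially the same route as the paper: the decomposition $\Sym^n(\cM\oplus\cN)\cong\bigoplus_{p+q=n}\Sym^p(\cM)\otimes\Sym^q(\cN)$, four uniform bounds obtained by applying ampleness to $\cF$ and to the finitely many ``small-index'' coherent sheaves, and the same three-case split in which the regime where both $p$ and $q$ are large is handled by tensoring two weighted globally generated sheaves. The only differences are cosmetic --- the roles of $\cM$ and $\cN$ are swapped, and you add the (harmless, in fact clarifying) remarks on induction and on $\cM\oplus\cN$ being a vector bundle.
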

		\begin{proof}
			The proof is similar to the one given by Hartshorne \cite{Har}.
			We know that
			$$
			\Sym^n(\cM\oplus\cN) = \bigoplus_{p=0}^{n} \Sym^p(\cM)\otimes \Sym^{n-p}(\cN).
			$$
			Write $q=n-p$. It suffices to show that there exists some non-negative integer $n_0$ such that when $p+q \geqslant n_0$ then 
			$$
			\cF\otimes \Sym^p(\cM)\otimes \Sym^p(\cN)
			$$
			is weighted globally generated.
			
			Fix some coherent sheaf $\cF$,
			\begin{enumerate}
				\item $\cM$ is ample so there exists a positive integer $n_1$ such that for all $n \geqslant n_1$, 
				$$
				\Sym^n(\cM)
				$$
				is weighted globally generated.
				\item $\cN$ is ample so there exists a positive $n_2$ such that for all $n \geqslant n_2$, 
				$$
				\cF \otimes \Sym^n(\cN)
				$$
				is weighted globally generated.
				\item For each $r\in\{0,...,n_1-1\}$, the sheaf $\cF \otimes \Sym^r(\cM)$ is coherent. Since $\cN$ is ample, there exists $m_r$ such that for all $n \geqslant m_r$,
				$$
				\cF \otimes \Sym^r(\cM) \otimes \Sym^n(\cN)
				$$
				is weighted globally generated.
				\item For each $s\in\{0,...,n_2-1\}$, the sheaf $\cF \otimes \Sym^s(\cN)$ is coherent. Since $\cM$ is ample, there exists $l_s$ such that for all $n \geqslant l_s$,
				$$
				\cF \otimes \Sym^n(\cM) \otimes \Sym^s(\cN)
				$$
				is weighted globally generated.
			\end{enumerate}
			
			Now take $n_0=\max_{r,s}\{r+m_r,s+l_s\}$, then for any $n\geqslant n_0$ 
			$$
			\cF \otimes \Sym^p(\cM) \otimes \Sym^q(\cN)
			$$
			is weighted globally generated.
			
			Indeed, we have 3 cases,
			\begin{enumerate}[(i)]
				\item Suppose $p <n_1$. Then $p+q\geqslant n_0 \geqslant p+m_p$, so $q\geqslant m_p$ and by 3. we are done.
				\item Suppose $q <n_2$. Then $p+q\geqslant n_0 \geqslant l_q+q$, so $p\geqslant l_q$ and by 4. we are done.
				\item Suppose $p \geqslant n_1$ and $q \geqslant n_2$, so $\Sym^p(\cM)$ and $\cF\otimes \Sym^q(\cN)$ are weighted globally generated and so is their tensor product.
			\end{enumerate}
			We conclude that $\cM \oplus \cN$ is ample.
		\end{proof}
		
		\begin{cor}
			Let $\cM$ and $\cN$ be two sheaves. Then $\cM \oplus \cN$ is ample if and only if $\cM$ and $\cN$ are ample.
		\end{cor}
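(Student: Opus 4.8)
The plan is to obtain both directions of the equivalence directly from the two preceding propositions, with essentially no further computation.

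For the ``if'' direction I would simply invoke the immediately preceding Proposition: if $\cM$ and $\cN$ are ample, then their finite direct sum $\cM \oplus \cN$ is ample. Nothing more is required in this half.

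For the ``only if'' direction I would exploit that the structural projections
$$
\cM \oplus \cN \twoheadrightarrow \cM \qquad \text{and} \qquad \cM \oplus \cN \twoheadrightarrow \cN
$$
are split epimorphisms in the additive category $A\Sa$, each projection admitting the corresponding inclusion as a section. Hence $\cM$ and $\cN$ are each realised as quotient sheaves of $\cM \oplus \cN$. Assuming $\cM \oplus \cN$ is ample, the Proposition asserting that any quotient sheaf of an ample sheaf is ample then yields at once that both $\cM$ and $\cN$ are ample.

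The only point that deserves a line of justification is that these projections are genuinely epimorphisms in $A\Sa$; this is immediate, since a split surjection is an epimorphism in any additive category. I do not expect any real obstacle here, as both halves reduce to statements already established: the forward implication is the content of the direct-sum proposition, and the reverse implication is an application of the quotient proposition. Combining the two implications gives the stated equivalence.
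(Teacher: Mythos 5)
Your proposal is correct and follows exactly the paper's own argument: the forward direction cites the preceding proposition on finite direct sums of ample sheaves, and the reverse direction realises $\cM$ and $\cN$ as quotients of $\cM \oplus \cN$ via the canonical projections and applies the proposition that quotients of ample sheaves are ample. The only difference is that you spell out why the projections are epimorphisms, which the paper leaves implicit.
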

		\begin{proof}
			We already know that if $\cM$ and $\cN$ are ample then so is their direct sum. Conversely, $\cM$ and $\cN$ are quotient of $\cM\oplus\cN$ which is ample, so are $\cM$ and $\cN$.
		\end{proof}
		
		\begin{cor}
			The tensor product of an ample sheaf and a weighted globally generated sheaf is ample.
		\end{cor}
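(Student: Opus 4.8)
The plan is to reduce to the two stability results just established --- that a quotient of an ample sheaf is ample, and that a finite direct sum of ample sheaves is ample --- so that the corollary follows once one knows each twist $\cM\otimes\cO(j)$ is ample. Write $\cM$ for the ample sheaf and $\cN$ for the weighted globally generated one. First I would take the epimorphism $\bigoplus_{j=0}^{l-1}\cO(j)^{\oplus s_j}\twoheadrightarrow\cN$ supplied by the definition of weighted global generation and tensor it by $\cM$. Because the tensor product on $A\Sa$ is right exact and additive, this gives an epimorphism
$$
\bigoplus_{j=0}^{l-1}\bigl(\cM\otimes\cO(j)\bigr)^{\oplus s_j}\twoheadrightarrow \cM\otimes\cN .
$$
Since quotients of ample sheaves are ample, it then suffices to prove the source is ample, and since a finite direct sum of ample sheaves is ample, this reduces to showing that $\cM\otimes\cO(j)$ is ample for each $0\le j\le l-1$.

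For the core computation, fix a coherent sheaf $\cF$. The key point is the shift isomorphism
$$
\Sym^n(\cM\otimes\cO(j))\cong\Sym^n(\cM)\otimes\cO(nj),
$$
valid because $\cO(j)$ is invertible. Granting it,
$$
\cF\otimes\Sym^n(\cM\otimes\cO(j))\cong\bigl(\cF\otimes\Sym^n(\cM)\bigr)\otimes\cO(nj).
$$
Ampleness of $\cM$ yields an $n_0$ (depending on $\cF$) with $\cF\otimes\Sym^n(\cM)$ weighted globally generated for all $n\ge n_0$, while $\cO(nj)$ is weighted globally generated for every $n\ge 0$ since $nj\ge 0$. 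As the tensor product of two weighted globally generated sheaves is weighted globally generated, the right-hand side is weighted globally generated for all $n\ge n_0$, which is exactly ampleness of $\cM\otimes\cO(j)$. This closes the argument.

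The hard part will be the shift isomorphism $\Sym^n(\cM\otimes\cO(j))\cong\Sym^n(\cM)\otimes\cO(nj)$. With $\Sym^n$ defined as the coequalizer of the $S_n$-action on $T^n$, one identifies $T^n(\cM\otimes\cO(j))\cong T^n(\cM)\otimes\cO(j)^{\otimes n}$ and observes that, since the braiding $\cO(j)\otimes\cO(j)\to\cO(j)\otimes\cO(j)$ of an invertible sheaf is the identity, the symmetric group permutes only the $\cM$-factors; passing to coequalizers then pulls the line bundle $\cO(j)^{\otimes n}\cong\cO(nj)$ out of the symmetric power. One could instead avoid this twist by using the natural epimorphism $\Sym^n(\cM)\otimes\Sym^n(\cN)\twoheadrightarrow\Sym^n(\cM\otimes\cN)$ after checking that $\Sym^n(\cN)$ is weighted globally generated, but that route relies on the same computation $\Sym^a\cO(j)\cong\cO(aj)$, so the behaviour of symmetric powers under twisting by a line bundle is the genuine heart of the matter.
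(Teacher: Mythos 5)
Your proof is correct, and it follows the paper's overall reduction --- tensor the defining epimorphism for $\cN$ by $\cM$, then use quotient- and direct-sum-stability of ampleness to reduce to showing each $\cM\otimes\cO(j)$ is ample --- but your treatment of the key step is genuinely different and, in fact, tighter than the paper's. The paper handles $\cM\otimes\cO(j)$ by observing that $\cF\otimes\Sym^n(\cM\otimes\cO(j))$ is a quotient of $\cF\otimes\Sym^n(\cO(j))\otimes\Sym^n(\cM)$ and then invoking ampleness of $\cM$ against the coherent sheaf $\cF\otimes\Sym^n(\cO(j))$. That step has a quantifier wrinkle: the definition of ampleness produces an $n_0$ only after the coherent sheaf is fixed, whereas the sheaf $\cF\otimes\Sym^n(\cO(j))$ varies with $n$, so one gets an $n_0$ depending on $n$ and the needed diagonal statement does not follow without an extra observation. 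Your shift isomorphism $\Sym^n(\cM\otimes\cO(j))\cong\Sym^n(\cM)\otimes\cO(nj)$ repairs exactly this: ampleness is applied once, to the fixed sheaf $\cF$, and the twist $\cO(nj)$ is absorbed using the facts that $\cO(k)$ is weighted globally generated for $k\geqslant 0$ and that a tensor product of weighted globally generated sheaves is weighted globally generated, both available from the paper's earlier proposition; as you note at the end, the paper's epimorphism route can be repaired the same way by rewriting $\Sym^n(\cO(j))\cong\cO(nj)$. One small caveat: your justification that the braiding on $\cO(j)\otimes\cO(j)$ is the identity ``because $\cO(j)$ is invertible'' is not a valid general categorical principle (it fails, for instance, for invertible objects in super vector spaces); here it holds because the symmetry on $A\Gr$ is the plain swap and $A[j]$ is free of rank one over the commutative ring $A$, so the swap on $A[j]\otimes_A A[j]$ is literally the identity, and this persists after saturation.
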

		\begin{proof}
			Let $\cM$ be an ample sheaf and $\cN$ a weighted globally generated sheaf. So,
			$$
			\bigoplus_{j = 0}^{l-1} \cO(j)^{\oplus s_j} \rightarrow \cN \rightarrow 0.
			$$
			Tensoring by $\cM$,
			$$
			\bigoplus_{j = 0}^{l-1} \cM \otimes \cO(j)^{\oplus s_j} \rightarrow \cM \otimes \cN \rightarrow 0.
			$$
			It suffices to show that $\cM \otimes \cO(j)$ for $j\in\{0,...,l-1\}$ is ample.
			Let $\cF$ to be a coherent sheaf and consider 
			$$
			\cF \otimes \Sym^n(\cM \otimes \cO(j))
			$$
			for $n$ a non-negative integer. It is a quotient of
			$$
			\cF \otimes \Sym^n(\cM) \otimes \Sym^n(\cO(j)) \cong \cF \otimes \Sym^n(\cO(j)) \otimes \Sym^n(\cM).
			$$
			But $\cF \otimes \Sym^n(\cO(j))$ is a coherent sheaf and $\cM$ is ample, so there exists a non-negative integer $n_0$ such that for all $n \geqslant n_0$
			$$
			\cF \otimes \Sym^n(\cO(j)) \otimes \Sym^n(\cM)
			$$
			is weighted globally generated. It follows that all of its quotients are weighted globally generated and in particular $\cF \otimes \Sym^n(\cM \otimes \cO(j))$. Hence, $\cM \otimes \cO(j)$ is ample and the result follows.
		\end{proof}
		
		\begin{lemma}
			The sheaf $\cO(1)$ is ample.
		\end{lemma}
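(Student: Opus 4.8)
The plan is to unwind the definition of ampleness and reduce it to a Serre-type generation statement. First note that $\cO(1)$ is a vector bundle: its defining module $A[1]$ is free of rank one over $A$, so $\underline{\Hom}_{A\mbox{\tiny\Gr}}(A[1],-)$ is exact and $\cExt^i(\cO(1),\cO)=0$ for $i>0$, exactly as in the projectivity argument already used for $\cO(k)$. Next I would identify the symmetric powers. Because $A[1]$ is free of rank one, its $n$-th symmetric power coincides with its $n$-th tensor power, giving $\mathrm{S}^n(A[1])\cong A[n]$ in $A\Gr$; applying the proposition that computes $\Sym^n$ via saturation yields $\Sym^n(\cO(1))\cong\cO(n)$. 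Then, using $\Sat(\Sat(M)\otimes\Sat(N))\cong\Sat(M\otimes_A N)$ with a coherent sheaf $\cF=\Sat(F)$, $F$ finitely generated, we obtain $\cF\otimes\Sym^n(\cO(1))\cong\Sat(F[n])$. Thus it suffices to show that $\Sat(F[n])$ is weighted globally generated for all $n\gg 0$.

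The heart of the argument is a module-theoretic generation claim. Since $F$ is finitely generated, its generators lie in degrees $\leq M_0$ for some $M_0$, and hence $F_m=\sum_i \vx_i F_{m-d_i}$ for every $m>M_0$. I would fix any $n\geq M_0$ and let $F'\subseteq F$ be the submodule generated by the finite-dimensional window $\bigoplus_{k=n-l+1}^{n} F_k$, where $l=\lcm(d_0,\dots,d_n)$. By induction on $m$, using that each $d_i$ divides $l$ and so $d_i\leq l$, one proves $F_m=F'_m$ for all $m\geq n-l+1$: for $m>n$ the relation $F_m=\sum_i \vx_i F_{m-d_i}$ reduces to strictly smaller degrees $m-d_i$ that are still $\geq n-l+1$, and these are covered either by the base window or by the inductive hypothesis. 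Consequently $F/F'$ is concentrated in degrees $<n-l+1$, hence finite-dimensional, hence torsion.

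Finally I would translate this into weighted global generation. A basis of $F_{n-j}=(F[n])_{-j}$ determines a homomorphism $A[j]\to F[n]$ for each $j\in\{0,\dots,l-1\}$, and assembling these gives a map $\bigoplus_{j=0}^{l-1}A[j]^{\oplus s_j}\to F[n]$ with $s_j=\dim_\bK F_{n-j}$ whose image is precisely $F'$. Since $F/F'$ is torsion, applying $\Sat$ turns this into an epimorphism in $A\Sa$, exhibiting $\cF\otimes\Sym^n(\cO(1))\cong\Sat(F[n])$ as weighted globally generated for every $n\geq M_0$. As $\cF$ was an arbitrary coherent sheaf, this shows $\cO(1)$ is ample.

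The main obstacle — indeed the only nontrivial point — is the generation claim, and specifically the bookkeeping that a window of $l$ consecutive degrees suffices. This is exactly where the choice $l=\lcm(d_0,\dots,d_n)$ in the definition of weighted global generation is essential: one needs each weight $d_i$ to be at most $l$ (in fact to divide $l$) so that multiplication by the generators $\vx_i$ never steps below the window. I would take care to seed the induction correctly on the base window $[n-l+1,n]$, and to invoke finite generation twice: once to bound the generator degrees (producing $M_0$) and once to guarantee that each $F_k$ is finite-dimensional, so that the multiplicities $s_j$ are finite and the cokernel $F/F'$ is genuinely torsion.
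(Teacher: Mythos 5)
Your proof is correct, and it shares the paper's overall skeleton: fix a coherent sheaf $\cF=\Sat(F)$ with $F$ finitely generated, identify $\cF\otimes\Sym^n(\cO(1))$ with $\Sat(F[n])$, and exhibit an epimorphism in $A\Sa$ from a finite direct sum of sheaves $\cO(j)$ with $0\leqslant j<l$ by producing a map in $A\Gr$ whose cokernel is torsion. Where you genuinely differ is in how that map is built and how torsion-ness of the cokernel is checked. The paper works with a chosen set of homogeneous generators $f_i$ of degrees $\rho_i$: writing $n-\rho_i=a_il+r_i$, it maps one copy of $\cO(r_i)$ for each pair $(i,j)$ via multiplication by $\vx_j^{a_il/d_j}f_i$, and then argues that every homogeneous element of sufficiently high degree is divisible by some $\vx_j^{a_il/d_j}$, so that the cokernel is annihilated in high degrees. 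You instead take the window of graded pieces $F_{n-l+1},\dots,F_n$, map $\bigoplus_{j=0}^{l-1}A[j]^{\oplus s_j}$ onto the submodule $F'$ they generate, and prove by induction (using $d_j\leqslant l$) that $F'$ agrees with $F$ in all degrees $\geqslant n-l+1$, so the cokernel sits in finitely many degrees and is finite-dimensional, hence torsion. Your variant is somewhat more robust: the paper's divisibility step as written uses the bound $N=\max\{a_il/d_j\}$, which needs a pigeonhole-type correction (a monomial of degree exceeding this maximum need not be divisible by any $\vx_j^{a_il/d_j}$; one should compare degrees against a sum such as $(n+1)a_il$), whereas finite-dimensionality of a bounded-degree piece of a finitely generated module is immediate. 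The paper's map, on the other hand, is more explicit and economical. You also spell out the identifications $\Sym^n(\cO(1))\cong\cO(n)$ and $\cF\otimes\cO(n)\cong\Sat(F[n])$ that the paper uses tacitly when it writes $\cF(n)$. One caveat: your opening claim that $\cO(1)$ is a vector bundle merely because $\underline{\Hom}_{A\Gr}(A[1],-)$ is exact is too quick, since $\Ext^i_{A\Sa}(\cO(1),\cO)$ is computed against the complex $Q^\bullet(A)$, whose higher cohomology is (twisted) sheaf cohomology and need not vanish; what actually kills $\cExt^i(\cO(1),\cO)$ is that these graded Ext modules are torsion and hence die under $\Sat$. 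Since the paper treats local freeness of $\cO(k)$ separately (and equally briefly) before the definition of ampleness, this does not affect the substance of your argument for the lemma itself.
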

		\begin{proof}
			Let $\cF=\Sat(F)$ be a coherent sheaf. So $F$ is a finitely generated module over $A$ generated by finitely many homogeneous elements $f_0,...,f_c$ of degree $\rho_0,...,\rho_c$ respectively.
			
			Take $n_0=\max\{\rho_0,...,\rho_c\}$, then for each $n \geqslant n_0$ we have
			$$
			n-\rho_i=a_il+r_i
			$$
			where $0\leqslant r_i <l$ by the division algorithm. We claim that the following map is an epimorphism in $A\Sa$,
			$$
			\bigoplus_{j=0}^n \bigoplus_{i=0}^c \cO(r_i) \rightarrow \cF(n)
			$$
			induced by $((0,...,0),...,(0,...,1_i,...,0)_j,...(0,...,0)) \mapsto x_j^{\frac{a_il}{d_j}}f_i$.
			Indeed, to prove the claim we need to show that the cokernel of the map in $A\Gr$ is torsion. So take $f\in F(n)$ homogeneous and assume that $f$ can be written $kf_i$ for some $i\in\{0,...,c\}$. Let $N=\max\left\lbrace  \dfrac{a_il}{d_j}, i\in\{0,...,c\}, j\in\{0,...,n\} \right\rbrace $. Suppose that $h\in A$ is an homogeneous element of degree greater than $N$. So it can be written as $h'x_j^{ \frac{a_il}{d_j}}$ for some $i\in\{0,...,c\}$ and $j\in\{0,...,n\}$. It follows that $hf$ is in the image of the map. Henceforth its cokernel is zero.
		\end{proof}
		
		Since $\cO(1)$ is ample and weighted globally generated, so $\cO(2)$ is ample for any weighted projective stack. However, for $\bP(3,5)$, seen as a variety, we have $\cO(2) \cong \cO(-1)$ which isn't ample.
		
		\begin{theorem}
			The tangent sheaf of any weighted projective stack is ample.
		\end{theorem}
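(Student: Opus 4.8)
The plan is to realize the tangent sheaf as a quotient of a direct sum of ample line bundles, by means of a weighted version of the Euler sequence, and then to conclude using only the formal closure properties of ample sheaves established above.

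First I would construct the generalized Euler sequence at the level of graded $A$-modules. Write $A=\bK[\vx_0,\dots,\vx_n]$ with $\deg(\vx_i)=d_i$, so that $\bP(a_0,\dots,a_n)=[Y/\bG_m]$ with $Y=\Spec A\setminus\{0\}$. The module of $\bK$-derivations $\mbox{\rm Der}_{\bK}(A)\cong\bigoplus_{i=0}^n A(d_i)\,\partial_{\vx_i}$ is free, the generator $\partial_{\vx_i}$ sitting in degree $-d_i$. The Euler derivation $E=\sum_{i=0}^n d_i\vx_i\,\partial_{\vx_i}$ has degree $0$ and spans a free rank-one submodule corresponding to the vertical direction of the $\bG_m$-torsor $Y\to[X]$. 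Setting $T:=\coke\!\left(A\xrightarrow{\,1\mapsto(d_i\vx_i)_i\,}\bigoplus_{i=0}^n A(d_i)\right)$ and letting $\cT:=\Sat(T)$ be the tangent sheaf, the surjection of graded modules together with exactness of $\pi_A$ yields the short exact sequence in $A\Sa$
$$
0 \to \cO \to \bigoplus_{i=0}^n \cO(d_i) \to \cT \to 0,
$$
and in particular an epimorphism $\bigoplus_{i=0}^n \cO(d_i)\twoheadrightarrow \cT$.

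Next I would verify that each $\cO(d_i)$ is ample. Since the weights are positive we have $d_i\geqslant 1$, so $\cO(d_i)\cong\cO(1)\otimes\cO(d_i-1)$. By the Lemma $\cO(1)$ is ample, and by part (3) of the Proposition on weighted globally generated sheaves $\cO(d_i-1)$ is weighted globally generated (as $d_i-1\geqslant 0$); hence by the Corollary their tensor product $\cO(d_i)$ is ample. Since a finite direct sum of ample sheaves is ample, $\bigoplus_{i=0}^n\cO(d_i)$ is ample. Finally, $\cT$ is a quotient of this ample sheaf, so by the Proposition that quotients of ample sheaves are ample, $\cT$ is ample. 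Here one uses that $\cT$, being the tangent sheaf of a smooth stack, is a genuine vector bundle---equivalently, the cokernel of a sub-line-bundle of a locally free sheaf on a smooth space---so that the notion of ampleness indeed applies to it.

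I expect the main obstacle to be the first step: carefully justifying the weighted Euler sequence inside the saturation framework, namely identifying $T_{[X]}$ with $\Sat$ of the cokernel of the Euler map and checking exactness at the saturated level. This relies on $\pi_A$ being exact and on cokernels in $A\Sa$ being computed as the saturation of cokernels in $A\Gr$. Once this sequence is in place, the ampleness of $\cT$ follows purely formally from the properties of ample sheaves already at our disposal.
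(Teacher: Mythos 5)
Your proposal is correct and takes essentially the same route as the paper: the paper likewise starts from the weighted Euler sequence $0 \to \cO \to \bigoplus_{j=0}^n \cO(a_j) \to \cT \to 0$ (quoted from the literature rather than derived from the Euler derivation as you do), shows each summand is ample using the ampleness of $\cO(1)$ together with the corollary on tensoring an ample sheaf with a weighted globally generated one, and concludes by closure of ampleness under finite direct sums and quotients. The only difference is presentational: the paper writes $\cO(a_i)=\cO(1)^{\otimes a_i}$ where you factor $\cO(d_i)\cong\cO(1)\otimes\cO(d_i-1)$, both resting on the same corollary.
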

		\begin{proof}
			We have the following short exact sequence \cite{Zho}
			$$
			0 \rightarrow \cO \rightarrow \bigoplus_{j=0}^n \cO(a_j) \rightarrow \cT \rightarrow 0.
			$$
			From the long exact sequence in cohomolgy applied to the above short exact sequence and the fact that $\cO$ and $\cO(a_j)$ are vector bundles, it is evident to see that so is $\cT$.
			Each summand of the central term is ample since $\cO(a_i)=\cO(1)^{\otimes a_i}$ and $\cO(1)$ ample. Moreover, $\cT$ is the quotient of a finite direct sum of ample sheaves. Then, $\cT$ is ample.
		\end{proof}
		
		We obtain the following corollary proved first by Hartshorne
		\begin{cor}[\cite{Har}]
			The tangent sheaf of a standard projective space is ample.
		\end{cor}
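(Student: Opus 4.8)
The plan is to obtain this as the special case of the Theorem in which every weight equals one. First I would record that the standard projective space $\bP^N$ is precisely the weighted projective stack $\bP(1,\ldots,1)$ with $N+1$ unit weights: in the notation of Section~1 the grading on $A=\bK[\vx_0,\ldots,\vx_N]$ has $\deg(\vx_i)=d_i=1$ for every $i$, so the $\bG_m$-action has all weights one and $[X]=[Y/\bG_m]$ is the ordinary $\Proj A = \bP^N$, a genuine variety rather than a nontrivial stack. Applying the Theorem to this case gives at once that $\cT$ is ample in the sense of our Definition.

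The only real content is to check that our cohomological notion of ampleness specialises to Hartshorne's classical one. Here I would appeal to the observation made just after the definition of weighted global generation: when all $d_i=1$ we have $l=\lcm(1,\ldots,1)=1$, so the defining epimorphism reduces to $\cO^{\oplus s}\twoheadrightarrow\cM$, which is exactly ordinary global generation. The Definition of ample --- that $\cF\otimes\Sym^n(\cM)$ be weighted globally generated for all $n\gg 0$ and all coherent $\cF$ --- then becomes Hartshorne's characterisation of an ample vector bundle word for word.

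It remains only to match the ambient structures. On $\bP^N$ the ring $A$ is a polynomial ring in $N+1\geqslant 2$ variables, so $\cO=\Sat(A)=A$, and the equivalence $A\Sa\cong\Qcoh(\bP^N)$ of Section~1 carries our saturated tensor product, symmetric powers, and the $\cExt$-defined notion of vector bundle to the usual sheaf-theoretic operations on $\bP^N$. Specialising the exact sequence of the Theorem to $a_j=1$ recovers the classical Euler sequence $0\to\cO\to\cO(1)^{\oplus(N+1)}\to\cT\to 0$, identifying $\cT$ with the ordinary tangent bundle of $\bP^N$. I do not anticipate a genuine obstacle: the argument is entirely a matter of reconciling definitions, and every structural compatibility needed has already been set up in Sections~1 and~2.
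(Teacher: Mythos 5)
Your proposal is correct and follows exactly the route the paper intends: the paper states this corollary with no written proof, treating it as the immediate specialisation of the Theorem to $\bP(1,\ldots,1)$, i.e.\ all weights $d_i=1$. Your extra verifications --- that $l=\lcm(1,\ldots,1)=1$ makes weighted global generation coincide with ordinary global generation, hence the stacky notion of ampleness with Hartshorne's, and that the exact sequence specialises to the classical Euler sequence --- merely fill in the definitional reconciliation the paper leaves implicit.
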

		
		A converse of this corollary was proved and provides a characterisation of smooth projective spaces also known as the Hartshorne conjecture and proved by Mori,
		\begin{theorem}[\cite{Mor}]
			The only smooth irreducible projective smooth variety with ample tangent bundle is isomorphic to $\bP^n$ for some $n$.
		\end{theorem}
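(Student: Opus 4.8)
The plan is to follow Mori's strategy, whose central idea is to produce many low-degree rational curves on $X$ and to exploit the precise positivity of the tangent bundle along them. First I would record the easy structural consequence: an ample vector bundle has ample determinant, and $\det T_X \cong \cO_X(-K_X)$, so the ampleness of $T_X$ forces $-K_X$ to be ample and $X$ is a Fano variety of some dimension $n$. This is the hypothesis that drives the whole argument.

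The core of the proof is the production of rational curves, and here the essential device is reduction to positive characteristic together with the ``bend and break'' technique. Starting from any curve $C \subset X$ and composing with iterates of the Frobenius to inflate its degree, a deformation-theoretic estimate on the scheme $\Hom(\bP^1, X)$, whose local dimension at $[f]$ is at least $-K_X \cdot f_*[\bP^1] + n(1-g)$, shows that once the degree is large the morphism moves in a family while fixing two prescribed points; a degenerate member of this family must then split off a rational component. Running this with the anticanonical degree kept under control, I would obtain that through every point of $X$ there passes a rational curve $C$ with $-K_X \cdot C \le n+1$, and then lift this conclusion back to characteristic zero by a spreading-out and specialization argument.

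Next I would invoke the full strength of ampleness of $T_X$, not merely of $-K_X$. For any nonconstant $f : \bP^1 \to X$ the bundle $f^* T_X$ is ample on $\bP^1$, hence splits as $\bigoplus_{i=1}^n \cO(a_i)$ with every $a_i \ge 1$; since the differential embeds $T_{\bP^1} = \cO(2)$ and $-K_X \cdot C = \sum a_i$, the minimal-degree curves produced above must satisfy $-K_X \cdot C = n+1$ with splitting exactly $\cO(2) \oplus \cO(1)^{\oplus(n-1)}$, which is precisely the splitting type of a line in $\bP^n$. I would then study the family of these minimal rational curves through a fixed point $x$: the deformation estimate gives this family dimension at least $n-1$, and the positivity guarantees that the associated tangent map to $\bP(T_x X) \cong \bP^{n-1}$ is dominant, so the minimal curves sweep out every tangent direction at a general point.

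Finally I would promote this to an isomorphism $X \cong \bP^n$. The idea is that the evaluation and tangent maps of the universal family of minimal rational curves exhibit $X$ as covered by a maximal-dimensional, unbent family of lines whose tangent map is a bijection onto $\bP^{n-1}$; uniqueness of the connecting curve between two general points together with the standard splitting type then identifies $X$ with projective space. The main obstacle, and Mori's decisive contribution, is precisely the middle step: producing rational curves at all and bounding their anticanonical degree, which genuinely requires passing to characteristic $p$ and using the Frobenius inside the bend-and-break estimate, since no purely characteristic-zero deformation argument yields the curves directly. The concluding rigidity step, while delicate, is then a matter of organizing the geometry of the minimal family that the preceding positivity has made available.
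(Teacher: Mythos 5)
The paper offers no proof of this theorem at all---it is stated purely as a citation of Mori \cite{Mor}---and your outline (ampleness of $\det T_X=-K_X$ making $X$ Fano, reduction to characteristic $p$ and bend-and-break via Frobenius to produce rational curves of anticanonical degree at most $n+1$ through every point, the splitting $f^*T_X\cong\cO(2)\oplus\cO(1)^{\oplus(n-1)}$ forced by ampleness on minimal curves, and the concluding rigidity via the family of minimal rational curves) is a faithful summary of exactly the argument in that reference, so your approach coincides with the one the paper implicitly relies on. The only caveat is that your sketch compresses the final identification of $X$ with $\bP^n$ (and the characteristic-zero lifting) into a few sentences, whereas these occupy a substantial portion of Mori's paper; as a summary of the cited proof, however, it is accurate.
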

		
		It is natural to ask whether the conjecture of Hartshorne holds as well extending the class of spaces. We formulate it as follows:
		
		\begin{conj}
			The only smooth irreducible projective stack with an ample tangent bundle is isomorphic to a weighted projective stack.
		\end{conj}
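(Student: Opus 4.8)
The plan is to transplant Mori's proof of the Hartshorne conjecture into the stacky setting, since no purely homological characterisation of weighted projective stacks is available and, as classically, the converse seems to demand the geometry of rational curves rather than the module theory developed above. Let $[X]=[Y/\bG_m]$ be a smooth irreducible projective stack with $\cT$ ample. By definition $[X]$ is already a smooth closed substack of some weighted projective stack $\bP(b_0,\ldots,b_m)$, so the conjecture is equivalent to showing that this embedding realises $[X]$ as a sub-weighted-projective-stack; concretely, that the presentation $Y\subset V\setminus\{0\}$ may be taken with $Y=V'\setminus\{0\}$ for a graded subspace $V'$, the induced weights supplying the $a_i$. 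The first step is to convert ampleness of $\cT$, in the weighted-global-generation sense defined above, into classical positivity: ampleness of each $\Sym^n(\cT)$ should force $\det\cT$ ample, so that $[X]$ is a Fano stack, and more importantly should force $[X]$ to be uniruled by orbifold rational curves.

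I would then develop the stacky analogue of bend-and-break. Using twisted stable maps from orbifold curves (footballs and teardrops) into $[X]$, one produces through a general point an orbifold rational curve $C$ of bounded anticanonical degree. The restriction of $\cT$ to the normalisation of $C$ is an ample bundle on a (stacky) $\bP^1$, hence splits with all summands positive; ampleness of $\cT$ together with the Euler-type sequence $0\to\cO\to\bigoplus\cO(a_j)\to\cT\to 0$ should pin the splitting type down tightly and bound the degree of the minimal such curves. Assembling these into a family, the goal is to show that the minimal orbifold rational curves through a general point sweep out every tangent direction, i.e. that the variety of minimal rational tangents is the full projective space of directions, now weighted by the orbifold structure along $C$.

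The reconstruction step mirrors the Cho--Miyaoka--Shepherd-Barron characterisation of $\bP^n$ by its variety of minimal rational tangents: once the minimal curves cover all directions and the stack is smooth and Fano, one rebuilds $[X]$ from the total space of this family. In the stacky case the cyclic stabilisers encountered along the minimal curves should reassemble into the $\bG_m$-weights, recovering $\bP(a_0,\ldots,a_n)$ rather than $\bP^n$; equivalently, one shows that $Y$ is forced to be an entire $V'\setminus\{0\}$ with its linear action, killing the defining ideal.

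The main obstacle is twofold. First, Mori's production of rational curves rests on reduction modulo $p$ and Frobenius pullback, and the behaviour of Frobenius and of the bend-and-break degeneration on Deligne--Mumford and Artin stacks is delicate: one must control the twisted stable maps and their limits without a naive Frobenius. Second, and more seriously, the coarse moduli space $X=Y/\!/\bG_m$ of a weighted projective stack is singular, so Mori's theorem cannot be applied to $X$ directly and the entire argument must be run intrinsically on $[X]$; pinning down the \emph{exact} weights $a_i$ from the orbifold structure of the minimal curves, and ruling out spurious gerbe factors, is where I expect the real difficulty to lie.
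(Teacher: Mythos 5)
The statement you are addressing is not proved in the paper at all: it is stated as an open conjecture, and the stated purpose of the paper is to leave it open after establishing the forward direction (ampleness of $\cT$ for weighted projective stacks). So the only question is whether your proposal closes the conjecture, and it does not: it is a research program, not a proof. Every load-bearing step is conditional in your own wording --- ampleness of $\cT$ ``should force'' $\det\cT$ ample and uniruledness, the Euler sequence ``should pin the splitting type down'', the stabilisers ``should reassemble'' into the weights. The three pillars (a stacky bend-and-break via twisted stable maps, splitting and degree bounds for $\cT$ restricted to orbifold rational curves, and a Cho--Miyaoka--Shepherd-Barron-type reconstruction for Deligne--Mumford stacks) are each substantial open problems, none carried out here, and you flag the characteristic-$p$ Frobenius issue yourself without resolving it. There is also a gap before your program can even begin: the paper's notion of ampleness is purely module-theoretic (weighted global generation of $\cF\otimes\Sym^n(\cM)$ inside the saturated category), and you would first need a bridge from this definition to geometric positivity along twisted curves --- restriction of a saturated module to an orbifold curve, degree bounds, a Barton--Kleiman-type criterion --- none of which exists in the paper or in your sketch.

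A second, more structural worry is that the conjecture may founder exactly where you defer difficulty to the end. The tangent bundle is insensitive to gerbe structure: if $[X]\rightarrow[X']$ is a gerbe banded by a finite group, the tangent bundles correspond under pullback, so ampleness of $\cT_{[X']}$ yields ampleness of $\cT_{[X]}$. Some such gerbes are weighted projective stacks (e.g. $\bP(r,\ldots,r)$ is a gerbe over $\bP^n$), but it is far from clear that every gerbe admissible under the paper's definition of projective stack is isomorphic to some $\bP(a_0,\ldots,a_n)$; if one is not, it is a counterexample, and any proof must begin, not end, by classifying these. Your plan treats ``ruling out spurious gerbe factors'' as a final technicality, when it is plausibly the decisive case of the conjecture. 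In short: the approach is a reasonable map of where a proof might live, but no step of it is established, so there is no proof to compare against the paper's (nonexistent) one.
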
	
		
		\bibliography{Reference_4th_year_report}{}

\providecommand{\bysame}{\leavevmode\hbox to3em{\hrulefill}\thinspace}
\begin{thebibliography}{1}

\bibitem{AZ}
M.~Artin and J.~J. Zhang, \emph{Noncommutative projective schemes}, Adv. Math.
  \textbf{109} (1994), no.~2, 228--287.

\bibitem{AKO}
Denis Auroux, Ludmil Katzarkov, and Dmitri Orlov, \emph{Mirror symmetry for
  weighted projective planes and their noncommutative deformations}, Ann. of
  Math. (2) \textbf{167} (2008), no.~3, 867--943.

\bibitem{EHR}
Karim El~Haloui and Dmitriy Rumynin, \emph{D-modules and projective stacks},
  arXiv preprint arXiv:1507.04999 (2015).

\bibitem{Ga}
P.~Gabriel, \emph{Des cat\'egories ab\'eliennes}, Bull. Soc. Math. France
  \textbf{90} (1962), 323--448.

\bibitem{GP}
Grigory Garkusha and Mike Prest, \emph{Reconstructing projective schemes from
  {S}erre subcategories}, J. Algebra \textbf{319} (2008), no.~3, 1132--1153.

\bibitem{Har}
Robin Hartshorne, \emph{Ample vector bundles}, Inst. Hautes \'Etudes Sci. Publ.
  Math. (1966), no.~29, 63--94.

\bibitem{Mor}
Shigefumi Mori, \emph{Projective manifolds with ample tangent bundles}, Ann. of
  Math. (2) \textbf{110} (1979), no.~3, 593--606.

\bibitem{Zho}
S.~Zhou, \emph{Orbifold riemann--roch and hilbert series}, Ph.D. thesis,
  University of Warwick, 2009.

\end{thebibliography}
		\bibliographystyle{amsplain}
	\end{document}